\numberwithin{equation}{section}
\let\OLDthebibliography\thebibliography
\renewcommand\thebibliography[1]{
  \OLDthebibliography{#1}
  \setlength{\parskip}{0pt}
  \setlength{\itemsep}{2pt plus 0.5ex}
}
\def\@cite#1#2{{\m@th\upshape\bfseries%
[{#1\if@tempswa{\m@th\upshape\mdseries, #2}\fi}]}}
\theoremstyle{plain}
\newtheorem{theorem}{Theorem}[section]
\newtheorem{corollary}[theorem]{Corollary}
\newtheorem{proposition}[theorem]{Proposition}
\newtheorem{lemma}[theorem]{Lemma}
\theoremstyle{definition}
\newtheorem{definition}[theorem]{Definition}
\newtheorem{question}[theorem]{Question}
\theoremstyle{remark}
\newcommand{\bbC}{{\mathbb{C}}}
\newcommand{\bbF}{{\mathbb{F}}}
\newcommand{\bbT}{{\mathbb{T}}}
\newcommand{\bbZ}{{\mathbb{Z}}}
\newcommand{\A}{{\mathcal{A}}}
\newcommand{\B}{{\mathcal{B}}}
\renewcommand{\H}{{\mathcal{H}}}
\newcommand{\T}{{\mathcal{T}}}
\newcommand{\X}{{\mathcal{X}}}
\newcommand{\Y}{{\mathcal{Y}}}
\renewcommand{\phi}{\varphi}
\newcommand{\upchi}{{\raise.35ex\hbox{\ensuremath{\chi}}}}
\newcommand{\Aut}{\operatorname{Aut}}
\newcommand{\id}{{\operatorname{id}}}
\newcommand\diag{\mathop{\rm diag}}
\newcommand{\ca}{\mathrm{C}^*}
\begin{document}
\title[Isomorphism of tensor algebras]{The isomorphism problem for tensor algebras of multivariable dynamical systems}

\author[E.G. Katsoulis]{Elias~G.~Katsoulis}
\address {Department of Mathematics 
\\East Carolina University\\ Greenville, NC 27858\\USA}
\email{katsoulise@ecu.edu}

\author[C. Ramsey]{Christopher~Ramsey}
\address {Department of Mathematics and Statistics
\\MacEwan University \\ Edmonton, AB \\Canada}
\email{ramseyc5@macewan.ca}

\thanks{2010 {\it  Mathematics Subject Classification.}
47L55, 47L65, 46L40}
\thanks{{\it Key words and phrases:} semicrossed product, tensor algebra, dynamical system, operator algebra} 

\maketitle
\begin{abstract}
We resolve the isomorphism problem for  tensor  algebras of unital multivariable dynamical systems. Specifically we show that unitary equivalence after a conjugation for multivariable dynamical systems is a complete invariant for complete isometric isomorphisms between their tensor algebras. In particular, this settles a conjecture of Davidson and Kakariadis relating to work of Arveson from the sixties, and extends related work of Kakariadis and  Katsoulis.
\end{abstract}

\section{Introduction}

Semicrossed products and their variants have appeared in the theory of operator algebras since the beginning of the subject \cite{Arv1} and continue to be at the forefront of the theory as they lend insight for considerable abstraction \cite{DavFulKak2, DavKak, DavKat1, Hum20, KatRamMem, Ramsey, Wi}.  

A $\ca$-dynamical system $(\A, \alpha)$ consists of a unital $\ca$-algebra $\A$ and a unital $*$-endomorphism $\alpha : \A\rightarrow \A$. An isometric covariant representation $(\pi, V)$ of $(\A, \alpha)$
consists of a non-degenerate  $*$-representation $\pi$ of $\A$ on a Hilbert
space $\H$ and an isometry $V \in B(\H )$ so that $\pi(a)V=V\pi(\alpha(a))$, for all $ a \in \A$.  The semicrossed product
$\A\rtimes_{\alpha}\bbZ^{+}$ is the universal operator algebra associated with ``all" covariant representations of $(\A, \alpha)$, i.e., the universal algebra generated
by a copy of $\A$ and an isometry $v$ satisfying the covariance relations. In the case where $\alpha$ is an automorphism of $\A$, then $\A\rtimes_{\alpha}\bbZ^{+}$  is isomorphic to the subalgebra of the crossed product $\ca$-algebra $\A\rtimes_{\alpha}\bbZ$ generated by $\A$ and the ``universal" unitary $u$ implementing the covariance relations. 

One of the central problems in the study of semicrossed products is the classification problem, whose study spans more than 50 years. This problem asks if two semicrossed products are isomorphic as algebras exactly when the corresponding $\ca$-dynamical systems are outer conjugate, that is, unitarily equivalent after a conjugation. The classification problem first appeared in the works of Arveson \cite{Arv1} and Arveson and  Josephson \cite{ArvJ}. It was subsequently investigated by Peters \cite{Peters}, Hadwin and Hoover \cite{HadHoo}, Power \cite{Pow1} and  Davidson and Katsoulis \cite{DavKat1}, who finally settled the case where $\A$ is abelian. In the general case where $\A$ may not  be abelian, initial consideration for the isomorphism problem was given in \cite{DavKat1.5, MS00} and considerable progress was made by Davidson and  Kakariadis \cite{DavKak} who resolved the problem for \textit{isometric isomorphisms} and dynamical systems consisting of injective endomorphisms. Actually  the work of Davidson and Kakariadis went well beyond systems consisting of injective endomorphisms. In \cite[Theorem 1.1]{DavKak} these authors also worked the case of epimorphic systems and in \cite[Theorem 1.2]{DavKak} they offered 6 additional conditions, with each one of them guaranteeing  a positive resolution for the isomorphism problem. All these partial results offered enough  evidence for Davidson and Kakariadis to conjecture\footnote{See just below Theorem 1.1 in \cite{DavKak}.} that the isomorphism problem for isometric isomorphisms must  have a positive solution for arbitrary systems. This conjecture is now being verified here in Corollary~\ref{cor;main}, thus resolving the isomorphism problem for unital dynamical systems and their semicrossed products at the level of isometric isomorphisms. Our initial approach is different from that of  \cite{DavKak} and actually allows us to achieve more.

A \textit{multivariable $\ca$-dynamical system} is a pair $(\A, \alpha)$ consisting of a unital $\ca$-algebra $\A$ along with unital $*$-endomorphisms $\alpha = (\alpha_1, \dots, \alpha_n)$ of $\A$ into itself. A row isometric representation of $(\A, \alpha)$ consists of a non-degenerate  $*$-representation $\pi$ of  $\A$ on a Hilbert space $\H$ and a row isometry $V = (V_1, V_2, \dots , V_n)$ acting on $\H^{(n)}$ so that $\pi(a)V_i=V_i\pi(\alpha(a))$, for all $a\in \A$ and $i = 1, 2, \dots, n$. The tensor algebra $\T^{+}(\A, \alpha)$ is the universal algebra generated
by a copy of $\A$ and a row isometry $v$ satisfying the covariance relations. The tensor algebras form a tractable multivariable generalization of the semicrossed products; indeed in the case where $n=1$, a multivariable system consists of a single endomorphism $\alpha$ and the corresponding tensor algebra $\T^{+}(\A, \alpha)$ coincides with the semicrossed product $\A\rtimes_{\alpha}\bbZ^{+}$. One should be careful to note that there are semicrossed products of multivariable $\ca$-dynamical systems as well, but these are different from the tensor algebras being discussed here. (See \cite{ DavKat2, KakK, Ramsey} for more information.)

In the case where $\A$ is abelian, tensor algebras of multivariable systems were first studied in detail by Davidson and Katsoulis \cite{DavKat2}. These authors developed a satisfactory dilation theory and provided invariants of a topological nature for algebraic isomorphisms, which in certain cases turned out to be complete.  Specifically, in~\cite[Definition 3.22]{DavKat2} Davidson and Katsoulis introduced the concept of piecewise conjugacy for classical multivariable dynamical systems and in \cite[Theorem 3.22]{DavKat2} they established that if the tensor algebras are algebraically isomorphic then the two dynamical systems must be piecewise conjugate. However, the converse could only be established for tensor algebras with $n =2 $ or $3$ \cite[Proposition 3.22]{DavKat2} with a gap in the topological theory preventing a complete result.

The tensor algebras of perhaps non-abelian multivariable systems were studied in \cite{KakK, KakK12}. In general, the piecewise conjugacy of Davidson and Katsoulis does not generalize to this context and even when it does, e.g. automorphic multivariable systems, it may not form a complete invariant for isometric isomorphisms~\cite[Example 4.12]{KakK}. In \cite[Theorem 4.5(ii)]{KakK} it was established that another invariant, the unitary equivalence after a conjugation (see Definition \ref{maindefn} below) actually forms a complete invariant for isometric isomorphisms between tensor algebras of \textit{automorphic} multivariable systems. Under various assumptions Kakariadis and  Katsoulis were able to  show that the classification scheme \cite[Theorem 4.5(ii)]{KakK} could be extended  to more general dynamical systems  but a complete solution was not obtained in \cite{KakK}. (This was pointed out as an open problem in \cite[Question 1]{KakK} and \cite{Kat1}, just after Theorem 2.6.15). All these assumptions are now being  removed in this paper and in Theorem~\ref{T;main} we obtain a complete classification of  all tensor algebras of (unital) multivariable systems up to completely isometric isomorphism.

The semicrossed products of single variable systems and more  generally  the tensor  algebras of multivariable  systems are prototypical examples of tensor algebras of $\ca$-correspondences. These algebras were pioneered by Muhly and Solel \cite{MS1} and generalize many concrete classes of operator algebras, including graph algebras and more. The isomorphism problem generalizes in this setting and asks if two tensor algebras of $\ca$-correspondences are isomorphic exactly when the $\ca$-correspondences are unitarily equivalent. Muhly and Solel studied this problem in \cite{MS00} and resolved it affirmatively for aperiodic $\ca$-correspondences and complete isomorphisms. However many natural examples of $\ca$-correspondences, including those associated with tensor algebras of multivariable systems or graph algebras, may fail to be aperiodic. As it turns out, the unitary equivalence of the correspondences associated with tensor algebras of multivariable systems coincides with unitary equivalence after a conjugation for the multivariable systems themselves. Therefore our Theorem~\ref{T;main} resolves Muhly and Solel's classification problem for an important class of $\ca$-correspondences and lends support for an overall affirmative answer of this problem. We plan to pursue this in a subsequent work.

Finally a word about our notation. If $\rho : \X \rightarrow \Y$ is any map between linear spaces, then its $(m, n)$-th ampliation is the matricial map 
\[
\rho^{(m, n)}: M_{m,n}(\X)\longrightarrow M_{m,n}(\Y); {[x_{i j}]_{i=1}^m} _{j=1}^{n} \longmapsto {[\rho(x_{i,j})]_{i=1}^m} _{j=1}^{n}.
\]
In what  follows, in order to avoid the use of heavy notation we will be dropping the superscript $(m,n)$ from $\rho^{(m,n)}$ and the symbol $\rho$ will be used not only for the map $\rho$ itself but for all of its ampliations as well. It goes without saying that the order of an ampliation will be easily understood form the context. 

\section{The main result}

Suppose $\mathcal A$ is a unital C$^*$-algebra and $\alpha_i:\mathcal A\rightarrow\mathcal A, 1\leq i\leq n$, are unital $*$-endomorphisms. Recall that the tensor algebra $\T^+(\A, \alpha)$ of the C$^*$-dynamical system $(\mathcal A,\alpha)$ is generated by $v_1, \dots, v_n$, which form a row isometry $v = [v_1\ v_2\ \dots\ v_n]$ and a faithful copy of $\mathcal A$. The  row isometry $v$ encodes the dynamics of the dynamical system $(\mathcal A,\alpha)$ in the sense that $av_i=v_i\alpha_i(a)$, for all $a \in \A$ and $1\leq i \leq n$. By definition, the tensor algebra $\T^+(\A, \alpha)$ is universal over all representations encoding the dynamics of $(\mathcal A,\alpha)$ so that the generating isometries $v_1, v_2, \dots , v_n$ are mapped to a row isometry.

Due to its universality, the tensor algebra $\T^+(\A, \alpha) $ admits a gauge action 
$$\zeta:\bbT \longrightarrow \Aut (\T^+(\A, \alpha)); \ \ \bbT\ni \lambda \longmapsto \zeta_{\lambda} $$ 
so that $\zeta_{\lambda} (a) = a $, for all $a \in \A$, and $\zeta_{\lambda} (v_p)=\lambda^{|p|} v_p$, where $p=i_1i_2\dots i_k \in \bbF^+_{n}$ is an element of the free semigroup with $n$ generators, $|p| :=k$ denotes the length of $p$ and $v_p:=v_{i_1}v_{i_2}\dots v_{i_k}$. (We write $p=0$ is the empty word, with the understanding that $|0| =0$ and $v_{0}:=I$.) From this gauge action we deduce that every element $x \in \T^+(\A, \alpha) $ admits a formal Fourier series development $x \sim \sum_{k =0}^{\infty} E_k(x)$. Each $E_k: \T^+(\A, \alpha) \rightarrow \T^+(\A, \alpha)$ is a completely contractive, $\A$-module projection on the subspace of $\T^+(\A, \alpha) $ generated by elements of the form $v_pa_p$, with $p \in \bbF^+_{n}$, $|p|=k$ and $a_p \in \A$. Furthermore $E_0$ is a multiplicative expectation onto $\A \subseteq \T^+(\A, \alpha)$. Finally the formal series  $x \sim \sum_{k =0}^{\infty} E_k(x)$ is Cesaro-convergent to $x $, i.e., 
\[
x = \lim_{n\rightarrow \infty} \sum_{k=0}^n \left(1 - \frac{k}{n+1}\right)E_k(x).
\]
See \cite{DavKat2} for a comprehensive development of this theory.

The following is a key result in our investigation.

\begin{theorem}\label{thm:mobius}
If $b = [b_1 \ \dots \ b_n]$ is a strict row contraction in $\mathcal A$ such that  $ab_i = b_i\alpha_i(a)$ for all $a\in \mathcal A$ and $1\leq i\leq n$ then there is a completely isometric automorphism $\rho$ of $\T^+(\A, \alpha)$ such that $\rho(a) = a$ for $a\in \mathcal A$,
\[
\rho(v) = (I - bb^*)^{1/2}(I-vb^*)^{-1}(b-v)(I_n-b^*b)^{-1/2}
\]
and $\rho\circ\rho = \id$. Furthermore, $E_0(\rho(v_i)) = b_i, 1\leq i\leq n$.
\end{theorem}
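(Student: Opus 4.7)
The plan is to apply the universal property of $\T^+(\A, \alpha)$: once the $1 \times n$ row $w := (I - bb^*)^{1/2}(I-vb^*)^{-1}(b-v)(I_n-b^*b)^{-1/2}$ is shown to be a row isometry satisfying the covariance relation $aw_i = w_i\alpha_i(a)$, universality will produce a completely contractive homomorphism $\rho$ of $\T^+(\A, \alpha)$ with $\rho|_\A = \id$ and $\rho(v_i) = w_i$. The inverses and square roots make sense since $\|bb^*\| = \|b^*b\| = \|b\|^2 < 1$ and $\|vb^*\| \le \|v\|\|b\| < 1$. Taking adjoints of $ab_i = b_i\alpha_i(a)$ yields the dual relation $b_i^* a = \alpha_i(a) b_i^*$, from which routine computations show that $bb^* \in \A$ is central in $\A$, that $vb^* \in \T^+(\A, \alpha)$ commutes with $\A$, and that $b^*b \in M_n(\A)$ commutes with $D_a := \diag(\alpha_1(a), \ldots, \alpha_n(a))$. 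Consequently $(I-bb^*)^{1/2}$ and $(I-vb^*)^{-1}$ commute with $\A$, while $(I_n - b^*b)^{-1/2}$ commutes with $D_a$. Combined with $a(b-v) = (b-v) D_a$, this delivers $aw = w D_a$, the required covariance.

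The main technical step is showing $w^* w = I_n$. Passing to the $C^*$-envelope of $\T^+(\A, \alpha)$, where $v^*v = I_n$, a geometric series expansion combined with $v^*v = I_n$ yields the key Möbius identity
\[
(b^* - v^*)(I - vb^*)^{-1} = -v^*,
\]
whose adjoint reads $(I-bv^*)^{-1}(b - v) = -v$. Splitting $I - bb^* = (I - bv^*) + b(v^* - b^*)$ and applying the first identity on the right, one derives
\[
(I-bv^*)^{-1}(I - bb^*)(I-vb^*)^{-1} = (I-vb^*)^{-1} + (I-bv^*)^{-1} - I.
\]
Sandwiching this between $(b^* - v^*)$ and $(b - v)$ and invoking both Möbius identities collapses the right-hand side to $I_n - b^*b$. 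Conjugating by $(I_n - b^*b)^{-1/2}$ then gives $w^* w = I_n$, so $w$ is a row isometry in the $C^*$-envelope of $\T^+(\A, \alpha)$, and universality delivers the completely contractive homomorphism $\rho$.

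To finish, $\rho \circ \rho = \id$ reduces by universality to $\rho^2(v) = v$. Since $\rho$ is a continuous homomorphism fixing $\A$, $\rho^2(v)$ equals the Möbius expression with $v$ replaced by $w$, and the equality $\rho^2(v) = v$ is the classical involutivity of the noncommutative Möbius transform, established by the same type of algebraic manipulation as above (with the roles of $v$ and $w$ interchanged; one checks directly that $w$ satisfies the analogous intertwining relations needed). Being a completely contractive involution, $\rho$ is then automatically completely isometric. For the last assertion, expand $(I - vb^*)^{-1} = \sum_{k \ge 0}(vb^*)^k$: all terms of positive order in $v$ vanish under $E_0$, leaving $E_0(\rho(v)) = (I-bb^*)^{1/2}\, b\, (I_n - b^*b)^{-1/2}$. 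The polynomial identity $(I-bb^*) b = b(I_n - b^*b)$ extends via continuous functional calculus to $(I-bb^*)^{1/2} b = b(I_n - b^*b)^{1/2}$, whence $E_0(\rho(v_i)) = b_i$. The principal obstacle is the row-isometry computation in paragraph two: it depends critically on the $C^*$-envelope identity $v^*v = I_n$ and on a careful algebraic unpacking of the Möbius expression using the commutation relations built in paragraph one.
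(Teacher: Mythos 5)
Your proposal is correct and follows essentially the same route as the paper: you verify that $w$ is a row isometry via the identity $(I-bv^*)^{-1}(I-bb^*)(I-vb^*)^{-1} = (I-bv^*)^{-1}+(I-vb^*)^{-1}-I$ (your M\"obius identity $(b^*-v^*)(I-vb^*)^{-1}=-v^*$ is exactly the paper's factorization $b^*-v^*=v^*(vb^*-I)$), check covariance from the same commutation relations, invoke universality, and use the Halmos intertwining $(I-bb^*)^{1/2}b=b(I_n-b^*b)^{1/2}$ for the $E_0$ computation. The only step you compress is the verification $w(w(v))=v$, which you correctly identify as the standard involutivity of the M\"obius transform and which the paper carries out explicitly by a direct algebraic computation relying on that same Halmos identity.
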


\begin{proof}
By hypothesis $b$ is a strict contraction and so $\|vb^*\| < 1$ which implies that $I-vb^*$ and $D_{b*} := (I-bb^*)^{1/2}$ are indeed invertible in $\mathcal A$. Similarly, $D_{b} := (I_n - b^*b)^{1/2}$ is invertible in $M_n(\mathcal A)$. One calculates
\begin{align*}
(I-bv^*)^{-1}&(I-bb^*)(I-vb^*)^{-1} 
\\ &= (I-bv^*)^{-1}(I-bv^*vb^*)(I-vb^*)^{-1}
\\ &= (I-bv^*)^{-1}(I  - vb^* + (I -bv^*)vb^*)(I-vb^*)^{-1}
\\ &= (I-bv^*)^{-1} + vb^*(I-vb^*)^{-1}
\\ &= (I-bv^*)^{-1} + (I-vb^*)^{-1} - I
\end{align*}
which gives for $w(v) := (I - bb^*)^{1/2}(I-vb^*)^{-1}(b-v)(I-b^*b)^{-1/2}$ that
\begin{align*}
w(v)^*w(v)
& = D_b^{-1}(b^*-v^*)(I-bv^*)^{-1}(I-bb^*)(I-vb^*)^{-1}(b-v)D_b^{-1}
\\ & = D_b^{-1}v^*(vb^*- I)\Big((I-bv^*)^{-1} + (I-vb^*)^{-1} - I\Big)(bv^*- I)vD_b^{-1}
\\ & = D_b^{-1}v^*\Big((I-vb^*) + (I-bv^*) - (I-vb^*)(I-bv^*)\Big)vD_b^{-1}
\\ & = D_b^{-1}v^*(I - vb^*bv^*)vD_b^{-1}
\\ & = D_b^{-1}(I_n-b^*b)D_b^{-1} 
\\ & = I_n.
\end{align*}
Hence, $w(v)$ is a row isometry. Now by the conjugation relation in the hypothesis we have $bb^*$, $bv^*$ and $vb^*$ commute with $\mathcal A$ and $b^*b$ commutes with $\diag(\alpha_1(a),\dots, \alpha_n(a)), a\in \mathcal A$. Using this one gets for $a\in \mathcal A$ that
\begin{align*}
aw & = a(I-bb^*)^{-1}(I-vb^*)^{-1}(b-v)(I_n-b^*b)^{-1/2}
\\ &  = (I - bb^*)^{1/2}(I-vb^*)^{-1}a(b-v)(I_n-b^*b)^{-1/2}
\\ & = (I - bb^*)^{1/2}(I-vb^*)^{-1}(b-v)\diag(\alpha_1(a), \dots, \alpha_n(a))(I-b^*b)^{-1/2}
\\ & = (I - bb^*)^{1/2}(I-vb^*)^{-1}(b-v)(I-b^*b)^{-1/2}\diag(\alpha_1(a), \dots, \alpha_n(a))
\\ & = w\diag(\alpha_1(a), \dots, \alpha_n(a)).
\end{align*}
Thus, by the universal property there exists a unique completely contractive homomorphism $\rho$ of $\T^+(\A, \alpha)$ to itself such that
\[
\rho(a) = a, \forall a\in \mathcal A \quad \textrm{and} \quad [\rho(v_1) \dots \rho(v_n)] = w(v).
\]

Lastly, recall the classical Halmos functional calculus trick: since
\[
b(I_n-b^*b) = (I-bb^*)b
\]
then
\[
bD_b^{-1} = b(I_n-b^*b)^{-1/2} = (I-bb^*)^{-1/2}b = D_{b^*}^{-1}b.
\]
This allows us to compute
\begin{align*}
[\rho&\circ\rho(v_1),  \dots ,\rho\circ\rho(v_n)] = w(w(v))
\\ & = D_{b^*}\Big(I - D_{b^*}(I-vb^*)^{-1}(b-v)D_b^{-1}b^*\Big)^{-1}\Big( b - D_{b^*}(I-vb^*)^{-1}(b-v)D_b^{-1} \Big)D_b^{-1}
\\ &= D_{b^*}\Big(I - D_{b^*}(I-vb^*)^{-1}(b-v)b^*D_{b^*}^{-1}\Big)^{-1}\Big( b - D_{b^*}(I-vb^*)^{-1}(b-v)D_b^{-1} \Big)D_b^{-1}
\\ &= D_{b^*}^2\Big(I - (I-vb^*)^{-1}(b-v)b^*\Big)^{-1}D_{b^*}^{-1}\Big( b - D_{b^*}(I-vb^*)^{-1}(b-v)D_b^{-1} \Big)D_b^{-1}
\\ &= D_{b^*}^2\Big(I - (I-vb^*)^{-1}(b-v)b^*\Big)^{-1}\Big( D_{b^*}^{-1}b - (I-vb^*)^{-1}(b-v)D_b^{-1} \Big)D_b^{-1}
\\ &= D_{b^*}^2\Big(I - (I-vb^*)^{-1}(b-v)b^*\Big)^{-1}\Big( b - (I-vb^*)^{-1}(b-v)\Big)D_b^{-2}
\\ &= D_{b^*}^2\Big((I-vb^*) - (b-v)b^*\Big)^{-1}(I-vb^*)\Big( b - (I-vb^*)^{-1}(b-v)\Big)D_b^{-2}
\\ &= D_{b^*}^2(I-bb^*)\Big((I-vb^*)b - (b-v)\Big)D_b^{-2}
\\ &= (v-vb^*b)D_b^{-2}
\\ &= v.
\end{align*}
Therefore, by the universal property $\rho\circ\rho = \id$ and $\rho$ is a completely isometric automorphism of $\T^+(\A, \alpha)$.

To find the first Fourier coefficients of the $\rho(v_i)$ one needs to recall that $E_0:\T^+(\A, \alpha) \rightarrow \mathcal A$ is a completely contractive homomorphism given by sending $\mathcal A$ to itself and sending $v_i$ to 0, $1\leq i\leq n$.
Thus,
\begin{align*}
E_0([\rho(v_1) , \dots , \rho(v_n)]) & = E_0(w(v))
\\ & = E_0\big((I - bb^*)^{1/2}(I-vb^*)^{-1}(b-v)(I_n-b^*b)^{-1/2}\big)
\\ & = (I - bb^*)^{1/2}((I-0b^*))^{-1}(b-0)(I_n-b^*b)^{-1/2}
\\ & = (I - bb^*)^{1/2}b(I_n-b^*b)^{-1/2}
\\ & = b
\end{align*}
with the last equality arising from a familiar functional calculus argument.
\end{proof}

In the proof of our next result we make use of the orbit representations for a tensor algebra $\T^+(\A, \alpha)$. Let $\sigma: \A \rightarrow B(\H)$ be a $*$-representation. We define a representation 
\[
\pi_{\sigma}:\A \longrightarrow B(\H \otimes \ell^2(\bbF^+_n))
\]
by 
\[
\pi_{\sigma}(a)(x\otimes e_p )= \pi(\alpha_p(a))x\otimes e_p, \quad a \in \A,
\]
where $\{e_p\}_{p \in \bbF^+_n}$ is the canonical orthonormal basis of the Fock space $\ell^2(\bbF^+_n)$. Let $L_1, L_2, \dots , L_n$ be the Cuntz-Toeplitz isometries on $\ell^2(\bbF^+_n)$. Then the representation $\pi_{\sigma}$ together with the isometries $I\otimes L_1, I\otimes L_2, \dots, I\otimes L_n$ form a covariant representation of $(\A, \alpha)$ and determine the orbit representation of $\T^+(\A, \alpha)$ associated with $\rho$, which we also denote as $\pi_{\sigma}$.

\begin{proposition} \label{prop;strict}
Let $\psi : \T^+(\A, \alpha) \rightarrow \T^+(\A, \beta)$ be a completely isometric isomorphism and assume that $\psi|_{\mathcal A} = \id$. If $v= [v_1, v_2, \dots , v_{n_a}]$ is the generating row isometry for $ \T^+(\A, \alpha) $, then $\|E_0(\psi(v))\|<1$. 
\end{proposition}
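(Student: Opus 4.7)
Let $b := E_0(\psi(v)) = [b_1,\dots,b_{n_a}] \in M_{1,n_a}(\A)$. The plan is to first verify two basic facts and then argue by contradiction using Theorem~\ref{thm:mobius}. Since $\psi(v)$ is a row isometry and $E_0$ is completely contractive, $\|b\|\le 1$. The covariance relation $ab_i = b_i\alpha_i(a)$ for all $a\in\A$ follows by applying the $\A$-bimodule map $E_0$ to the identity $a\psi(v_i) = \psi(av_i) = \psi(v_i\alpha_i(a)) = \psi(v_i)\alpha_i(a)$, which uses $\psi|_{\A}=\id$ and that $\psi$ is a homomorphism. Consequently $b$ meets the covariance hypothesis of Theorem~\ref{thm:mobius} and is a strict row contraction as soon as $\|b\|<1$.

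Supposing for contradiction that $\|b\|=1$, I would argue as follows. For each $r\in(0,1)$ the row $rb$ is a strict row contraction satisfying the same $\alpha$-covariance, so Theorem~\ref{thm:mobius} produces a completely isometric automorphism $\rho_r$ of $\T^+(\A,\alpha)$ fixing $\A$ with $E_0(\rho_r(v))=rb$ and $\rho_r\circ\rho_r=\id$. The composition $\psi_r:=\psi\circ\rho_r$ is then another completely isometric isomorphism $\T^+(\A,\alpha)\to\T^+(\A,\beta)$ fixing $\A$, and I would compute its first Fourier coefficient $c_r:=E_0(\psi_r(v))$. Applying the homomorphism $\psi$ to the formula of Theorem~\ref{thm:mobius} for $\rho_r(v)$ substitutes $\psi(v)$ for $v$, and then the key point is that $E_0$ is itself a \emph{homomorphism} on $\T^+(\A,\beta)$: since products of nonnegatively graded elements stay nonnegatively graded, $E_0(xy)=E_0(x)E_0(y)$ for $x,y\in\T^+(\A,\beta)$, and in particular $E_0(\psi(v_{i_1})\cdots\psi(v_{i_k}))=b_{i_1}\cdots b_{i_k}$. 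Expanding the geometric series $(I-r\psi(v)b^*)^{-1}=\sum_{k\ge 0} r^k(\psi(v)b^*)^k$ and computing termwise yields $E_0((\psi(v)b^*)^k\psi(v))=(bb^*)^k b$ and $E_0((\psi(v)b^*)^k\cdot rb)=r(bb^*)^k b$; summing the difference and using the Halmos identity $b(I-rb^*b)^{-1}=(I-rbb^*)^{-1}b$ to absorb the $D$-factors, one arrives at the clean formula
\[
c_r \;=\; (r-1)(I - rbb^*)^{-1} b.
\]
A direct functional-calculus computation on the positive element $bb^*\in\A$, using that $1\in\sigma(bb^*)$ because $\|bb^*\|=\|b\|^2=1$, then shows that $\|c_r\|=1$ for every $r\in(0,1)$.

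The main obstacle is turning $\|c_r\|=1$ into an actual contradiction, because $\psi_r$ is itself a completely isometric isomorphism fixing $\A$ and invoking the proposition on $\psi_r$ would be circular. I would attempt to close the loop by analyzing the limit $r\to 1^-$, at which point the very hypothesis of Theorem~\ref{thm:mobius} breaks down for $b$ itself because $I-bb^*$ is no longer invertible. The cleanest mechanism I envision is an operator-valued maximum-modulus / Schwarz argument applied to the bounded holomorphic function $\lambda\mapsto\sum_k\lambda^k E_k(\psi(v))$ on $\bbD$; its boundary values (obtained via the gauge action) are row isometries and its value $b$ at $\lambda=0$ already saturates the sup-norm. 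This rigidity should force the higher Fourier coefficients to vanish, yielding $\psi(v)=b\in\A$, which contradicts the surjectivity of $\psi$ because the generators of $\T^+(\A,\beta)$ do not lie in $\A=\psi(\A)$.
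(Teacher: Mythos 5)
Your opening reductions are fine: $\|E_0(\psi(v))\|\le 1$ and the intertwining relation $ab_i=b_i\alpha_i(a)$ are exactly what the paper records (its equation (\ref{eq;mainrel})), and your M\"obius computation $E_0(\psi(\rho_r(v)))=(r-1)(I-rbb^*)^{-1}b$, together with $\|c_r\|=1$ when $1\in\sigma(bb^*)$, is correct. But, as you yourself concede, this leads nowhere: $\|c_r\|=1$ produces no contradiction, and invoking the proposition for $\psi\circ\rho_r$ would be circular. The mechanism you then propose to close the gap is not valid. There is no operator-valued maximum-modulus rigidity of the form you need: a bounded holomorphic $B(\H)$-valued function on the disc can attain its supremum norm at the origin without being constant --- take $F(\lambda)=\diag(1,\lambda)$, whose boundary values are even unitary and which satisfies $\|F(0)\|=\|F\|_\infty=1$. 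Constancy would follow if $F(0)$ were an isometry, but $F(0)=b_0$ merely having norm one is precisely the situation you are trying to rule out, so the rigidity is unavailable. Hence the step ``this forces the higher Fourier coefficients to vanish, so $\psi(v)=b\in\A$'' is unjustified, and the proof has a genuine gap.

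The paper's argument shows both what the correct, weaker form of that rigidity is and, crucially, what extra ingredient turns it into a contradiction. Choosing unit vectors $\xi_j$ with $\|b_0\xi_j\|\to 1$ and evaluating a Cesaro sum of $\psi(v)$ in the orbit (Fock-type) representation at $(\xi_j,0,0,\dots)$, one gets $\sum_{k\le n}\sum_{|p|=k}(1-\tfrac{k}{n+1})^2\|b_p\xi_j\|^2\le 1$, which forces $\|b_p\xi_j\|\to 0$ for every non-empty word $p$: the higher coefficients vanish only in the directions $\xi_j$, not identically. Even this does not suffice on its own; the contradiction comes from applying $E_0\circ\psi^{-1}$ to a Cesaro sum within $\tfrac12$ of $\psi(v)$. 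On one hand the result applied to $\xi_j$ has norm at most $\tfrac12+\|E_0(v)\xi_j\|=\tfrac12$, because $E_0$ annihilates the generator $v$ of the \emph{domain} algebra; on the other hand only the $p=0$ term survives in the limit, giving norm $\|b_0\xi_j\|\to 1$. The interplay between $\psi^{-1}$ and the fact that $E_0(v)=0$ is the ingredient your sketch is missing, and no amount of function theory applied to $\psi(v)$ alone appears to replace it.
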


\begin{proof}
Let $\sigma: \T^+(\A, \beta) \longrightarrow B(\mathcal H)$ be a completely isometric representation of $T^+(\A, \beta)$. Then $\sigma\circ \psi$ is a completely isometric representation of $T^+(\A, \alpha)$, which coincides with $\sigma$ on $\A$. This allows us to view both tensor algebras as being embedded in $B(\H)$ via maps that coincide on $\A$.

Let $w = [ w_1, w_2, \dots , w_{n_b}]$ be the generating row isometry\footnote{Note that we are not assuming $n_a=n_b$} for  $\T^+(\A, \beta)$.  
By the Fourier analysis discussed earlier
\begin{equation} \label{eq;specFourier}
\begin{split}
\psi(v)&= \lim_{n\rightarrow \infty} \sum_{k=0}^n \left(1 - \frac{k}{n+1}\right)E_k(\psi(v))\\
&= \lim_{n\rightarrow \infty} \sum_{k=0}^n \sum_{|p| = k }\left(1 - \frac{k}{n+1}\right)w_pb_p 
\end{split}
\end{equation}
with each $b_p=[b_{p,1},b_{p,2},\dots ,b_{p,n_{a}}]$ being a row contraction.

Assume by contradiction that $\|b_0\| = \|E_0(\psi(v))\| = 1$. Then there exists a sequence of unit vectors $\xi_j \in \mathcal H^{(n_b)}$ such that $\lim_{j\rightarrow \infty}\|b_0\xi_j\| =1$. 
Using these contractive Cesaro sums in conjunction with the orbit representation $\pi:= \pi_{\sigma_{\mid \A}}$ applied to the vectors $\vec{\xi}_j=\xi_j\otimes e_0$ we get 
\begin{align*}
1  & \geq
  \lim_{j\rightarrow \infty} \left\| \pi\left(  \sum_{k=0}^n \sum_{|p| = k }\left(1 - \frac{k}{n+1}\right)w_pb_p  \right) \vec{\xi}_j \right\|^2
\\ & = \lim_{j \rightarrow \infty}  \sum_{k=0}^n \sum_{|p| = k } \left(1 - \frac{k}{n+1}\right)^2 \|b_p\xi_j\|^2   
\\ & \geq \lim_{j\rightarrow \infty} \|b_0\xi_j\|^2= 1.
\end{align*}
This implies that $\lim_{j\rightarrow \infty} \|b_p\xi_j\| = 0$, for all non-empty words $p \in \bbF^+_{n_a}$.

Pick an $n\geq0$ such that 
\[
\left\| \sum_{k=0}^n \left(1 - \frac{k}{n+1}\right)E_k(\psi(v))  - \psi(v) \right\| \leq \frac{1}{2}.
\]
Then for every $j\geq 1$ we have
\begin{align*}
\left\|  E_0\circ\psi^{-1}\left( \sum_{k=0}^n \left(1 - \frac{k}{n+1}\right)E_k(\psi(v))\right)\xi_j\right\| 
\\ & \hskip -130 pt \leq \left\|  E_0\circ\psi^{-1}\left(\sum_{k=0}^n \left(1 - \frac{k}{n+1}\right)E_k(\psi(v))- \psi(v)\right)\xi_j\right\| + \|E_0\circ\psi^{-1}\circ\psi(v)\xi_j\| 
\\ & \hskip -130 pt \leq \left\| \sum_{k=0}^n \left(1 - \frac{k}{n+1}\right)E_k(\psi(v))- \psi(v)\right\|\|\xi_j\| + \|E_0(v)\xi_j\|
\\ & \hskip -130 pt \leq \frac{1}{2} + 0 = \frac{1}{2}.
\end{align*}

\vspace{.03in}
\noindent Using the fact that $\lim_{j\rightarrow \infty} b_p\xi_j = 0$, for all non-empty words $p \in \bbF^+_{n_a}$, we obtain that

\begin{align*}
\lim_{j\rightarrow \infty} \left\| E_0\circ\psi^{-1}\left( \sum_{k=0}^n \left(1 - \frac{k}{n+1}\right)E_k(\psi(v))\right)\xi_j \right\|
\\ & \hskip -80 pt = \lim_{j\rightarrow \infty} \left\|E_0\left( \sum_{k=0}^n \sum_{|p| = k }\left(1 - \frac{k}{n+1}\right)\psi^{-1}(w_p) b_p   \right)\xi_j\right\|
\\ & \hskip -80 pt = \lim_{j\rightarrow \infty} \left\| \sum_{k=0}^n \sum_{|p| = k }\left(1 - \frac{k}{n+1}\right)E_0\big(\psi^{-1}(w_p)\big)  b_p  \xi_j\right\|
\\ & \hskip -80 pt =  \lim_{j\rightarrow \infty}  \left\| E_0 \big(\psi^{-1}(w_0)\big)  b_0\xi_j \right\|
\\ & \hskip -80 pt = 1,
\end{align*}
which is a contradiction. Therefore, $b_0 = E_0(\psi(v))$ must be a strict contraction.
\end{proof}

Continuing with the assumptions and notation of the previous proposition and its proof, the matrix $[b_{i j }] \in M_{n_b, n_a}(\A)$, with entries $b_{i j}$ as appearing in (\ref{eq;specFourier}) with $|p|=1$, is called the \textit{matrix associated with $\psi$}. It has the property of \textit{intertwining the endomorphisms} $\alpha_1, \alpha_2, \dots , \alpha_{n_a}$ and $\beta_1, \beta_2, \dots, \beta_{n_b}$ in the sense,
\begin{equation} \label{eq;bij}
\beta_i(a)b_{ij}=b_{ij}\alpha_j(a)
\end{equation}
for all $a\in \A$, $1\leq i \leq n_b$ and $1\leq j \leq n_a$. A matrix $[a_{i j }] \in M_{n_a, n_b}(\A)$ with similar properties is associated with $\psi^{-1}$. Let us observe this more closely. 

Indeed, assume that $\psi(v)$ admits a Fourier development
\begin{equation}  \label{eq;psi}
\begin{split}
\psi(v)&= \lim_{n\rightarrow \infty} \sum_{k=0}^n \sum_{|p| = k }\left(1 - \frac{k}{n+1}\right)w_pb_p \\
&= b_0+w[b_{ij}]+\lim_{n\rightarrow \infty} \sum_{k=2}^n \sum_{|p| = k }\left(1 - \frac{k}{n+1}\right)w_pb_p 
\end{split}
\end{equation}
as in (\ref{eq;specFourier}). For any $a \in \A$ and $1\leq j\leq n_a$, we have 
$$a\psi(v_j) = \psi(av_j)=\psi(v_j\alpha_j(a)) = \psi(v_j) \alpha_j(a)$$ 
and so 
\begin{equation} \label{eq;assoc}
\begin{split}
a  \Big( b_{0j}+\sum_{i=1}^{n_b}w_ib_{ij}+ \lim_{n\rightarrow \infty} \sum_{k=2}^n & \sum_{|p| = k }\left(1 - \frac{k}{n+1}\right)w_pb_{p j} \Big) =\\ &=\Big(b_{0j}+\sum_{i=1}^{n_b}w_ib_{ij}+\sum_{k=2}^n \sum_{|p| = k }\left(1 - \frac{k}{n+1}\right)w_pb_{pj} \Big) \alpha_j(a)
\end{split}
\end{equation}
Apply $E_0$ to (\ref{eq;assoc}) to obtain 
\begin{equation} \label{eq;mainrel}
ab_{0j}=b_{0j} \alpha_j(a),  
\end{equation}
for all $a \in \A$ and $1\leq j \leq n_a$. The relations (\ref{eq;bij}) are obtained by applying $E_1$ to (\ref{eq;assoc}).

\begin{lemma} \label{lem;inv}
Let $\psi : \T^+(\A, \alpha) \rightarrow \T^+(\A, \beta)$ be a completely isometric isomorphism and assume that $\psi|_{\mathcal A} = \id$. Let $v$ and $w$ be the generating row isometries for $ \T^+(\A, \alpha) $ and $ \T^+(\A, \beta) $ respectively. Then $E_0(\psi(v))=0$ if and only if $E_0(\psi^{-1}(w))=0$. In such a case, the matrices associated with $\psi$ and $\psi^{-1}$ are inverses of each other.
\end{lemma}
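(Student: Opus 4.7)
My plan rests on the observation that $\sigma := E_0 \circ \psi : \T^+(\A,\alpha) \to \A$ is a unital completely contractive \emph{algebra homomorphism} (since both $E_0$ and $\psi$ are multiplicative) that fixes $\A$ pointwise. Crucially, $\sigma$ is determined by its values on the generators: $\sigma(v_j) = b_{0j}$, and hence $\sigma(v_p) = b_{0,p_1}\cdots b_{0,p_k}$ for every word $p = p_1\cdots p_k$ in $\bbF^+_{n_a}$.

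For the first equivalence, assume $b_{0j} = 0$ for all $j$. Then $\sigma$ annihilates $v_p$ whenever $|p| \geq 1$; because the range of $E_k$ is the norm-closed $\A$-module span of such $v_p$'s, continuity of $\sigma$ gives $\sigma \circ E_k = 0$ for every $k \geq 1$. Applying $\sigma$ term-by-term to the Cesaro series of $\psi^{-1}(w_i)$,
\[
\sigma(\psi^{-1}(w_i)) \;=\; \lim_{n\to\infty}\sum_{k=0}^n \left(1-\tfrac{k}{n+1}\right)\sigma\bigl(E_k(\psi^{-1}(w_i))\bigr) \;=\; \sigma(a_{0i}) \;=\; a_{0i}.
\]
On the other hand $\sigma(\psi^{-1}(w_i)) = E_0(\psi\circ\psi^{-1}(w_i)) = E_0(w_i) = 0$, so $a_{0i} = 0$ for each $i$, i.e.\ $E_0(\psi^{-1}(w)) = 0$. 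The reverse implication is obtained by exchanging the roles of $\psi$ and $\psi^{-1}$.

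For the inverse relation, assume $b_0 = 0$ and $a_0 = 0$. Then
\[
\psi(v_j) = \sum_i w_i b_{ij} + R_j, \qquad \psi^{-1}(w_i) = \sum_k v_k a_{ki} + S_i,
\]
with $R_j$ and $S_i$ Cesaro limits of terms of Fourier degree $\geq 2$. Minimal Fourier degree is super-additive under multiplication (via $v_p a \cdot v_q b = v_{pq}\alpha_q(a)b$), so because each $\psi^{-1}(w_{p_r})$ has minimal Fourier degree $\geq 1$, the product $\psi^{-1}(w_p)$ has minimal Fourier degree $\geq |p| \geq 2$; consequently $\psi^{-1}(R_j)$ has Fourier degree $\geq 2$, and so does $S_i b_{ij}$. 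Substituting the developments into $v_j = \psi^{-1}(\psi(v_j))$ and applying $E_1$ yields
\[
v_j \;=\; E_1(v_j) \;=\; \sum_k v_k \sum_i a_{ki} b_{ij}.
\]
Passing to a concrete faithful representation in which $v$ is a genuine row isometry and left-multiplying by $v_l^*$ extracts $\sum_i a_{li} b_{ij} = \delta_{lj}$. The symmetric computation starting from $w_i = \psi(\psi^{-1}(w_i))$ gives $\sum_k b_{lk} a_{ki} = \delta_{li}$, so the two matrices are two-sided inverses.

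The decisive conceptual step, and the one I expect to be the genuine obstacle, is the first: realising that $E_0 \circ \psi$ is a \emph{homomorphism} rather than merely a completely contractive projection. Once this is recognised, Cesaro convergence extracts precisely the constant Fourier coefficient of $\psi^{-1}(w_i)$ in one line. The remaining ingredients---super-additivity of Fourier degrees and the row-isometry relation $v_l^* v_k = \delta_{lk} I$ in a completion---are standard features of the tensor-algebra formalism and do not pose serious difficulty.
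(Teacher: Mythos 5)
Your proof is correct and follows essentially the same route as the paper: both arguments hinge on the super-additivity of minimal Fourier degree under multiplication (your degree-$\geq 2$ bookkeeping for $\psi^{-1}(w_p)$ is exactly the paper's Claim) and then extract the identity $[a_{ij}][b_{ij}]=I_{n_a}$ by applying $E_1$ to $\psi^{-1}(\psi(v))$ and using the row-isometry relation. The only real difference is in the first equivalence: you package it via the observation that $E_0\circ\psi$ is a completely contractive homomorphism killing everything of Fourier degree $\geq 1$ when $b_0=0$, which yields $a_0=0$ in one line, whereas the paper runs the same $E_0$-of-$\psi^{-1}\circ\psi(v)$ computation it already needs for the inverse relation (starting instead from $a_0=0$ and deducing $b_0=0$); your version is a clean, slightly more conceptual repackaging, but both rest on the same multiplicativity of $E_0$ and on Cesaro convergence, and the symmetry argument closes the equivalence in either case.
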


\begin{proof}
Assume that $\psi(v)$ admits a Fourier development as in (\ref{eq;psi})
and similarly
\begin{equation*} 
\psi^{-1}(w)=  a_0+v[a_{ij}]+\lim_{n\rightarrow \infty} \sum_{k=2}^n \sum_{|q| = k }\left(1 - \frac{k}{n+1}\right)v_qa_q 
\end{equation*}
Assume that $E_0(\psi^{-1}(w)) = a_0=0$.
\vspace{.05in}

\noindent \textbf{Claim.} \textit{$E_0(\psi^{-1}(w_p)) = E_1(\psi^{-1}(w_p))=0$ for all $p \in \bbF^+_{n_b}$ with $|p|\geq2$.}

\vspace{.05in}

Indeed, if $p=p_1p_2\dots p_l$, $l \geq2$, then 
\begin{equation}
\begin{split}
\psi^{-1}(w_p)&= \prod_{i=1}^{l}\psi^{-1}(w_{p_i}) \\
&=\lim_{n\rightarrow \infty} \prod_{i=1}^{l} \left(\sum_{k=1}^n \sum_{|q| = k }\left(1 - \frac{k}{n+1}\right)v_qa_{q, p_i}\right).
\end{split}
\end{equation}
Since in the limit above we have $l\geq 2 $ and $k\geq1$, a development of the product involved will reveal only terms of the form $v_u a_u$, with $u \in \bbF^+_{n_a}$, $|u|\geq 2$ and $a_u\in \A$. Since both $E_0$ and $E_1$ are continuous, this suffices to prove the claim.

\vspace{.02in}

For the proof consider $i=1,2$. Apply $\psi^{-1}$ to (\ref{eq;psi}) and use the Claim to obtain
\begin{equation*}
\begin{split}
E_i(v)&=E_i(\psi^{-1}(\psi(v)))\\
&= E_i((b_0)+E_i(\psi^{-1}(w)) [b_{ij}]+\lim_{n\rightarrow \infty} \sum_{k=2}^n \sum_{|p| = k }\left(1 - \frac{k}{n+1}\right) E_i(\psi^{-1}(w_p)) b_p \\
&= E_i(b_0) +E_i(\psi^{-1}(w))[b_{ij}] \\
&=E_i(b_0) +E_i\Big(v[a_{ij}]+\lim_{n\rightarrow \infty} \sum_{k=2}^n \sum_{|q| = k }\left(1 - \frac{k}{n+1}\right)v_qa_q \Big) [b_{ij}] \\
& = E_i(b_0) +E_i(v) [a_{ij}][b_{ij}], 
\end{split}
\end{equation*}
For $i=0$ we obtain $0=b_0=E_0(\psi(v))$. For $i=1$ we obtain $v =v [a_{ij}][b_{ij}]$ and so 
\[
I_{n_a} =v^*v=v^*v [a_{ij}][b_{ij}]= [a_{ij}][b_{ij}].
\]
By reversing the roles of $\psi$ and $\psi^{-1}$ and using what has been proven so far, we obtain $[b_{ij}].[a_{ij}] =I_{n_b}$. This completes the proof.
\end{proof}

\begin{corollary} \label{cor;secmain}
Let $\psi : \T^+(\A, \alpha) \rightarrow \T^+(\A, \beta)$ be a completely isometric isomorphism and assume that $\psi|_{\mathcal A} = \id$. Let $v$ be the generating row isometry for $\T^+(\A, \alpha) $ and assume that $E_0(\psi(v))=0$. Then there exists a unitary matrix $u \in M_{n_b, n_a}(\A)$ intertwining the endomorphisms $\alpha_1, \alpha_2, \dots , \alpha_{n_a}$ and $\beta_1, \beta_2, \dots, \beta_{n_b}$.
\end{corollary}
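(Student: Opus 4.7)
The plan is to promote the invertible intertwiner supplied by Lemma~\ref{lem;inv} to a unitary intertwiner by carrying out a polar decomposition inside $M_{n_b,n_a}(\A)$.

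First, Lemma~\ref{lem;inv} provides matrices $b:=[b_{ij}]\in M_{n_b,n_a}(\A)$ and $a:=[a_{ij}]\in M_{n_a,n_b}(\A)$, associated to $\psi$ and $\psi^{-1}$, satisfying $ab=I_{n_a}$ and $ba=I_{n_b}$. Rewriting (\ref{eq;bij}) in matrix form gives $\beta(x)\,b=b\,\alpha(x)$, where $\beta(x):=\diag(\beta_1(x),\ldots,\beta_{n_b}(x))$ and $\alpha(x):=\diag(\alpha_1(x),\ldots,\alpha_{n_a}(x))$. Taking adjoints (with $x^*$ in place of $x$) yields $b^*\beta(x)=\alpha(x)b^*$, and combining the two relations shows that $b^*b\in M_{n_a}(\A)$ commutes with $\alpha(x)$ for every $x\in\A$.

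Next, I will verify that $b^*b$ is invertible in $M_{n_a}(\A)$ by exhibiting $aa^*$ as its inverse. Multiplying $ba=I_{n_b}$ on the left by $b^*$ gives $(b^*b)a=b^*$, so $a^*(b^*b)a=(ba)^*(ba)=I_{n_a}$; dually, $(b^*b)(aa^*)=b^*a^*=(ab)^*=I_{n_a}$. Hence $(b^*b)^{-1}=aa^*$, and the continuous functional calculus produces $(b^*b)^{-1/2}\in M_{n_a}(\A)$. Approximation by polynomials in $b^*b$ shows that $(b^*b)^{-1/2}$ also commutes with $\alpha(x)$ for all $x\in\A$.

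Finally, set $u:=b(b^*b)^{-1/2}\in M_{n_b,n_a}(\A)$. A direct computation yields
\[
u^*u=(b^*b)^{-1/2}(b^*b)(b^*b)^{-1/2}=I_{n_a},\qquad uu^*=b(aa^*)b^*=(ba)(ba)^*=I_{n_b},
\]
so $u$ is unitary, and the commutation of $(b^*b)^{-1/2}$ with $\alpha(x)$ gives
\[
\beta(x)\,u=\beta(x)\,b(b^*b)^{-1/2}=b\,\alpha(x)(b^*b)^{-1/2}=b(b^*b)^{-1/2}\alpha(x)=u\,\alpha(x),
\]
which is precisely the desired intertwining relation on the entries of $u$. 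The whole argument is essentially routine polar decomposition; the real content was already packaged in Lemma~\ref{lem;inv}, and the only mild subtlety is ensuring $b^*b$ is invertible \emph{within} $M_{n_a}(\A)$, which the explicit inverse $aa^*$ provides.
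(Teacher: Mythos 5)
Your argument is correct and is essentially the paper's own proof: both invoke Lemma~\ref{lem;inv} to get that the associated matrix $b$ is invertible, observe from (\ref{eq;bij}) that $b^*b$ commutes with $\diag(\alpha_1(x),\dots,\alpha_{n_a}(x))$, and then take the polar decomposition $u=b(b^*b)^{-1/2}$, cancelling $|b|$ to transfer the intertwining to $u$. (Your extra verification that $(b^*b)^{-1}=aa^*$ lies in $M_{n_a}(\A)$ is a nice explicit touch the paper leaves implicit; the only blemish is the harmless mislabel $I_{n_a}$ for $I_{n_b}$ in the unused identity $a^*(b^*b)a=(ba)^*(ba)$.)
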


\begin{proof}
According to (\ref{eq;bij}), the matrix $b:=[b_{i j}] \in M_{n_b, n_a}(\A)$ associated with $\psi$ intertwines the endomorphisms $\alpha_1, \alpha_2, \dots , \alpha_{n_a}$ and $\beta_1, \beta_2, \dots, \beta_{n_b}$,  i.e., 
\begin{equation} \label{eq;intert}
\diag(\beta_1(a), \dots, \beta_{n_b}(a))b= b\diag(\alpha_1(a),\dots,\alpha_{n_a}(a)),
\end{equation}
for all $a \in \A$. Hence $b^*b$ (and therefore $|b|$) commutes with $\diag(\alpha_1(a),\dots,\alpha_{n_a}(a))$, for all $a\in \A$. By Lemma~\ref{lem;inv}, $b$ is invertible and so it admits a polar decomposition $b = u|b|$, with $u=M_{n_b, n_a}(\A)$ a unitary matrix. Furthermore, (\ref{eq;intert}) implies that
\begin{align*}
\diag(\beta_1(a), \dots, \beta_{n_b}(a)) u|b| &= u|b| \diag(\alpha_1(a),\dots,\alpha_{n_a}(a))\\
								&= u\diag(\alpha_1(a),\dots,\alpha_{n_a}(a)) |b|,
\end{align*}
for all $a \in \A$. Since $|b|$ is invertible, $\diag(\beta_1(a), \dots, \beta_{n_b}(a)) u= u \diag(\alpha_1(a),\dots,\alpha_{n_a}(a))$ and the conclusion follows.
\end{proof}

Motivated by the statement of the previous result, we introduce the following

\begin{definition} \label{maindefn}
Two multivariable dynamical systems $(\A, \alpha)$ and $(\A, \beta)$ are said to be {\em unitarily equivalent} if there exists a unitary matrix with entries in $\A$ intertwining the two systems. Two multivariable dynamical systems $(\A, \alpha)$ and $(\B, \beta)$ are said to be are {\em unitarily equivalent after a conjugation} if there exists a $*$-isomorphism $\gamma: \A\rightarrow \B$ so that the systems $(\A , \alpha)$ and $(\A , \gamma^{-1}\circ \beta \circ \gamma)$ are unitarily equivalent.
\end{definition}

Note that the above definition does not require that the multivariable systems  $(\A, \alpha)$ and $(\A, \beta)$ should have the same number of maps, i.e., $n_{\alpha}=n_{\beta}$, where $\alpha = (\alpha_1, \alpha_2, \dots , \alpha_{n_{\alpha}})$ and $\beta=(\beta_1, \beta_2, \dots, \beta_{n_{\beta}})$. On the contrary, it is possible  for two dynamical systems with a different number of maps to be unitarily  equivalent; see \cite[Example 5.1]{KakK}. Nevertheless in that  case at least one of the systems  will fail to be automorphic, as \cite[Theorem 4.4]{KakK} clearly indicates.

Recall that two (single variable) dynamical systems are said to be outer conjugate if there is a $*$-isomorphism $\gamma : \mathcal A \rightarrow \mathcal B$ and a unitary $u\in \mathcal A$ such that
\[
\alpha(c) = u(\gamma^{-1}\circ\beta\circ\gamma(c))u^*
\]
Therefore in that case the concept of unitary equivalence after a conjugation coincides with that of outer conjugacy. Davidson and Kakariadis \cite{DavKak} established that outer conjugacy of the systems implies that the associated tensor algebras are completely isometrically isomorphic. They showed that the converse is true in several broad cases (injective, surjective, etc.) and specifically when $\|E(\psi(v))\| < \frac{2}{3}\sqrt 3 - 1 \approx 0.1547$ \cite[Remark 3.6]{DavKak}. For multivariable dynamical systems consisting of automorphisms Kakariadis and Katsoulis have shown \cite[Theorem 4.5]{KakK} that the isomorphism of the tensor algebras is equivalent to unitary equivalence after a conjugation for the associated dynamical systems. A similar result was shown for arbitrary dynamical systems provided that the pertinent $\ca$-algebras are stably finite \cite[Theorem 5.2]{KakK}. Our next result removes all these conditions and establishes a complete result for the unital case.

\begin{theorem} \label{T;main}
If $\psi :\T^+(\A, \alpha) \rightarrow\T^+(\B , \beta)$ is a completely isometric isomorphism then $(\mathcal A,\alpha)$ and $(\mathcal B,\beta)$ are unitarily equivalent after a conjugation.\end{theorem}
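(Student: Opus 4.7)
My plan is a three-step argument: reduce to the case $\A = \B$ and $\psi|_\A = \id$ using the $\ca$-envelope; apply the Möbius automorphism of Theorem \ref{thm:mobius} to kill the zero-th Fourier coefficient of $\psi(v)$; then invoke Corollary \ref{cor;secmain} to extract the intertwining unitary.

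For the reduction, I use the intrinsic characterization $\A = \T^+(\A, \alpha) \cap \T^+(\A, \alpha)^*$ inside the $\ca$-envelope $\cenv(\T^+(\A, \alpha))$: an element of the tensor algebra whose adjoint also lies in the tensor algebra must be supported in Fourier level zero, by the gauge-action analysis already recalled in the excerpt. Since $\psi$ extends uniquely to a $*$-isomorphism of $\ca$-envelopes, it carries this intersection onto $\B$, so $\gamma := \psi|_\A : \A \rightarrow \B$ is forced to be a $*$-isomorphism. Composing $\psi$ with the natural isomorphism $\T^+(\B, \beta) \rightarrow \T^+(\A, \gamma^{-1} \circ \beta \circ \gamma)$ induced by $\gamma^{-1}$ reduces the problem to the case $\A = \B$ and $\psi|_\A = \id$, where it suffices to show that $(\A, \alpha)$ is unitarily equivalent to $(\A, \beta)$.

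Now set $b := E_0(\psi(v)) \in M_{1, n_\alpha}(\A)$. By Proposition \ref{prop;strict} and relation \eqref{eq;mainrel}, $b$ is a strict row contraction satisfying $a b_j = b_j \alpha_j(a)$ for all $a \in \A$ and $j$, so Theorem \ref{thm:mobius} provides a completely isometric involutive automorphism $\rho$ of $\T^+(\A, \alpha)$ fixing $\A$ pointwise, with $E_0(\rho(v)) = b$. Set $\tilde\psi := \psi \circ \rho$; this is a completely isometric isomorphism that still fixes $\A$, and since $\psi$ is a unital homomorphism fixing all coefficients in $\A$ we may substitute $\psi(v)$ for $v$ inside the Möbius formula for $\rho(v)$ to obtain
\[
\tilde\psi(v) \,=\, (I - bb^*)^{1/2}\,(I - \psi(v) b^*)^{-1}\,(b - \psi(v))\,(I_{n_\alpha} - b^* b)^{-1/2}.
\]
Writing $\psi(v) = b + \tilde X$ with $E_0(\tilde X) = 0$, we find $\psi(v) b^* = bb^* + Z$, where $bb^* \in \A$ and $Z$ has strictly positive Fourier degree. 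Expanding the norm-convergent Neumann series for $(I - \psi(v) b^*)^{-1}$ and multiplying by $-\tilde X$ on the right, each summand is either of the form $(bb^*)^n \tilde X$, which is annihilated by $E_0$ because $E_0$ is a left $\A$-module map and $E_0(\tilde X) = 0$, or it contains at least one factor of $Z$, in which case the full product has Fourier support in levels $\geq 2$ and so is again killed by $E_0$. Continuity of $E_0$ then yields $E_0(\tilde\psi(v)) = 0$, and Corollary \ref{cor;secmain} applied to $\tilde\psi$ produces a unitary $u \in M_{n_\beta, n_\alpha}(\A)$ intertwining $\alpha$ and $\beta$; undoing the initial $\gamma$-reduction delivers unitary equivalence after a conjugation of the original systems $(\A, \alpha)$ and $(\B, \beta)$.

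The main delicate point of the plan is the Fourier-level bookkeeping in the middle step: the identity $E_0(\tilde\psi(v)) = 0$ is morally a reflection of the fact that the Möbius transformation $M_b$ satisfies $M_b(b) = 0$, but the $\A$-bimodule structure intervenes nontrivially in the multivariable setting, so the vanishing has to be verified term-by-term in the Neumann expansion, using crucially that multiplication by elements of $\A$ preserves Fourier levels while strictly-positive-level factors multiply into still higher levels.
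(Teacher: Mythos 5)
Your proposal is correct and follows essentially the same route as the paper: reduce to $\A=\B$ with $\psi|_{\A}=\id$, use Proposition~\ref{prop;strict} and \eqref{eq;mainrel} to feed $b_0=E_0(\psi(v))$ into Theorem~\ref{thm:mobius}, show $E_0(\psi\circ\rho(v))=0$, and conclude via Corollary~\ref{cor;secmain}. The only difference is cosmetic: where you verify the vanishing of $E_0(\psi\circ\rho(v))$ term-by-term in a Neumann expansion, the paper gets it in one line from the multiplicativity of the expectation $E_0$ (already recorded in the preliminaries), since the factor $b_0-\psi(v)$ in the Möbius formula has zero $E_0$-image.
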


\begin{proof}
Without loss of generality we may assume that $\A = \B$ and $\psi|_{\mathcal A} = \id$. Indeed,
it is well-known that the restriction of the isomorphism $\psi$ on $\A \subseteq \T^+(\A, \alpha)$ induces a $*$-isomorphism $\gamma : \mathcal A \rightarrow \mathcal B$ of the diagonals of the two tensor algebras. (See for instance \cite[Proposition 3.1]{DavKat1.5}.) The dynamical systems $(\B , \beta)$ and $(\A , \gamma^{-1}\circ \beta \circ \gamma)$ are conjugate via $\gamma$ and so there exists a completely isometric isomorphism $\phi: \T^+(\B , \beta) \rightarrow \T^+(\A , \gamma^{-1}\circ \beta \circ \gamma)$ so that $\phi|_{\B} = \gamma^{-1}$. Hence $\phi\circ\psi$ establishes a completely isometric isomorphism between $\T^+(\A, \alpha)$ and $\T^+(\A , \gamma^{-1}\circ \beta \circ \gamma)$, whose restriction on $\A$ is the identity map. Therefore if $\gamma$ is not the identity map to begin with, then replace $\psi$ with $\phi\circ \psi$ and establish the conclusion for the dynamical systems $(\A , \alpha)$ and $(\A , \gamma^{-1}\circ \beta \circ \gamma)$.

For the proof, if $v = [v_1\ v_2\ \dots\ v_n]$ is the generating row isometry in $\T^+(\A, \alpha) $, then Proposition~\ref{prop;strict} gives that $b_0 = E_0(\psi(v))$ is a strict row contraction. Combined with (\ref{eq;mainrel}), this implies that $b_0$ satisfies the conditions of Theorem \ref{thm:mobius} and so there exists a completely isometric automorphism $\rho$ of  $\T^+(\A, \alpha)$ such that 
\[
\rho(v) = (I - b_0b_0^*)^{1/2}(I-vb_0^*)^{-1}(b_0-v)(I_n-b_0^*b_0)^{-1/2}.
\]
 Since $E_0$ is multiplicative, we obtain that
\begin{align*}
E_0(\psi\circ \rho(v) )
&= E_0\circ\psi\left( D_{b_0^*}(I-vb_0^*)^{-1}(b_0-v)D_{b_0}\right)
\\ &=D_{b_0^*}E_0((I-\psi(vb_0^*))^{-1}\left(b_0-E_0(\psi(v))\right)D_{b_0}
\\ &=D_{b_0^*}E_0((I-\psi(vb_0^*))^{-1}\left(b_0-b_0\right)D_{b_0}
\\ & = 0.
\end{align*}
Therefore, $\psi\circ\rho$ satisfies the requirements of Corollary~\ref{cor;secmain} and the conclusion follows.
\end{proof}

Of course, the converse of Theorem~\ref{T;main} is also true. If two multivariable systems $(\mathcal A,\alpha)$ and $(\mathcal B,\beta)$ are unitarily equivalent after a conjugation, then the associated $\ca$-correspondences are unitarily equivalent and so the tensor algebras $\T^+(\A, \alpha)$ and $\T^+(\B , \beta)$ are completely isometrically isomorphic. (See \cite[Theorem 4.5]{KakK12} and the discussion preceding it.) Hence Theorem~\ref{T;main} provides a complete classification of tensor algebras up to complete isomorphism. For semicrossed products we can say something more.

\begin{corollary} \label{cor;main}
Let $(\A, \alpha)$ and $(\B, \beta)$ be two unital $\ca$-dynamical systems. Then $\A\rtimes_{\alpha} \bbZ^+$ and $\B\rtimes_{\beta} \bbZ^+$ are isometrically isomorphic if and only $(\A, \alpha)$ and $(\B, \beta)$ are outer conjugate.
\end{corollary}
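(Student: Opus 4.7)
The reverse direction is standard: if $(\A,\alpha)$ and $(\B,\beta)$ are outer conjugate via a $*$-isomorphism $\gamma:\A\to\B$ and a unitary $u\in\A$ with $\alpha(a)=u(\gamma^{-1}\circ\beta\circ\gamma)(a)u^*$, then the assignment $a\mapsto\gamma(a)$ on the diagonals and $v_\alpha\mapsto v_\beta\gamma(u)$ on the isometric generators extends, by the universal property of the semicrossed products, to a completely isometric isomorphism $\A\rtimes_\alpha\bbZ^+\cong\B\rtimes_\beta\bbZ^+$, and in particular to an isometric one.

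For the forward direction, given an isometric isomorphism $\psi:\A\rtimes_\alpha\bbZ^+\to\B\rtimes_\beta\bbZ^+$ — equivalently, an isometric isomorphism $\T^+(\A,\alpha)\to\T^+(\B,\beta)$ since these are single-variable systems — my plan is to invoke Theorem~\ref{T;main}. The only gap is that T;main is stated for completely isometric isomorphisms, while the hypothesis supplies only an isometric one. I would bridge this gap in two moves. First, I would show that $\psi$ carries $\A$ onto $\B$ by characterizing the diagonal $C^*$-subalgebra intrinsically (for example as the range of the expectation $E_0$ arising from the canonical gauge action of $\bbT$, which is determined by the universal structure of the algebra). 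Once $\psi|_\A:\A\to\B$ is known to be a unital isometric algebra isomorphism between unital $C^*$-algebras, it is automatically a $*$-isomorphism, and composing with an inverse conjugation as in the opening paragraph of the proof of T;main reduces to $\psi|_\A=\id$. Second, with the diagonals aligned, every subsequent step in the proof of T;main — Proposition~\ref{prop;strict}, the Möbius automorphism of Theorem~\ref{thm:mobius}, Lemma~\ref{lem;inv} and Corollary~\ref{cor;secmain} — consists entirely of scalar computations in $\A$ when $n_\alpha=n_\beta=1$, so no genuinely completely isometric hypothesis is needed once the diagonals are aligned.

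Applying Theorem~\ref{T;main} then yields unitary equivalence after a conjugation: a $*$-isomorphism $\gamma:\A\to\B$ and a unitary element $u\in M_{1,1}(\A)=\A$ with $(\gamma^{-1}\circ\beta\circ\gamma)(a)\,u=u\,\alpha(a)$ for all $a\in\A$. Rearranging gives $\alpha(a)=u^*(\gamma^{-1}\circ\beta\circ\gamma)(a)u$, which is precisely the definition of outer conjugacy for $(\A,\alpha)$ and $(\B,\beta)$. The main obstacle in the plan is the very first step of the upgrade, namely producing a canonical description of $\A\subseteq\A\rtimes_\alpha\bbZ^+$ that survives transport by an arbitrary isometric (not merely completely isometric) isomorphism; this is the delicate point that prevented the earlier Davidson–Kakariadis analysis from closing the isometric case in full generality, and it is where I would expect the real technical work to live.
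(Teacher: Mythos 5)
Your reverse direction is fine, and your instinct that the entire content of the corollary is the upgrade from ``isometric'' to ``completely isometric'' is exactly right. But as written the forward direction has a genuine gap, and you flag it yourself: the first move of your upgrade --- producing an intrinsic description of $\A\subseteq\A\rtimes_{\alpha}\bbZ^+$ that is transported by a merely isometric isomorphism --- is left as an acknowledged obstacle rather than proved, and a plan whose pivotal step is described as ``where I would expect the real technical work to live'' is not yet a proof. (The step can in fact be carried out: a unital surjective isometric algebra isomorphism preserves hermitian elements, hence carries the diagonal $\T^+(\A,\alpha)\cap\T^+(\A,\alpha)^*=\A$ onto $\B$, and a unital multiplicative isometry between $\ca$-algebras is a $*$-isomorphism; but you would have to write this out, and you would then still owe a verification that Proposition~\ref{prop;strict}, Theorem~\ref{thm:mobius}, Lemma~\ref{lem;inv} and Corollary~\ref{cor;secmain} use no more than contractivity of $\psi$ and $\psi^{-1}$ on single elements when $n_\alpha=n_\beta=1$ --- your assertion that everything becomes ``scalar'' is plausible but unchecked.)

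The paper closes the gap by a different and much shorter route that your proposal misses entirely: by \cite[Corollary 3.14]{MS1}, every contractive representation of a semicrossed product of a unital $\ca$-dynamical system is automatically \emph{completely} contractive. Applying this to $\psi$ and to $\psi^{-1}$ (each being a contractive homomorphism of such a semicrossed product into an operator algebra), one concludes that any isometric isomorphism $\A\rtimes_{\alpha}\bbZ^+\rightarrow\B\rtimes_{\beta}\bbZ^+$ is automatically completely isometric, and Theorem~\ref{T;main} then applies verbatim --- no intrinsic characterization of the diagonal and no re-examination of the earlier proofs is needed. I recommend you either supply the missing diagonal-transport argument in full, or replace both of your ``moves'' with an appeal to the Muhly--Solel automatic complete contractivity result.
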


\begin{proof}
The result follows from Theorem~\ref{T;main} and the fact that contractive representations of semicrossed products associated with unital endomorphisms are always completely contractive \cite[Corollary 3.14]{MS1}.
\end{proof}

\section{concluding remarks and open problems}

(i) As we mentioned in the introduction, Davidson and Kakariadis \cite[Theorem 1.1]{DavKak} provide six different properties for a dynamical system with each one of them guaranteeing that the isomorphism problem has a positive resolution. It is instructive to observe that there are dynamical systems that do not satisfy any of their properties. Therefore our Corollary~\ref{cor;main} verifies the conjecture of Davidson and Kakariadis by going beyond the realm of  \cite[Theorem 1.1]{DavKak}.

Each one of the following conditions on a $\ca$-dynamical system $(\A, \alpha)$ is sufficient for \cite[Theorem 1.1]{DavKak} to apply
\begin{itemize}
\item[(1)] $\A$ has trivial center.
\item[(2)] $\A$ is abelian.
\item[(3)] $\A$ is finite, i.e., no proper isometries.
\item[(4)] $\alpha(\A)'$ is finite. 
\item[(5)] $\alpha(R_{\alpha})=R_{\alpha}$, where $R_{\alpha}=\overline{\cup_{k \geq 1} \ker(\alpha^k )}$.
\item[(6)] $\alpha(R_{\alpha}^{\perp}) \subseteq R_{\alpha}^{\perp}$.
\end{itemize}
We claim that for each $i=1,2,\dots, 6$, there exists a dynamical system $(\A_i, \alpha_i)$ that fails the corresponding condition $(i)$ from the above list. It is easy to see then that the dynamical system $(\oplus_{i=1}^6 \A_i, \oplus_{i=1}^6 \alpha_i)$ will fail all six conditions.

The existence of dynamical systems that do not satisfy anyone of the conditions (1), (2) or (3) is a trivial matter. For condition $(4)$, let $\H$ be a separable Hilbert space and let $B(\H), K(\H)$ denote the bounded and compact operators respectively acting on $\H$.
Let 
\[
\A_4 = B(\H) \otimes K(\H) + \bbC(I\otimes I)
\]
acting on $\H \otimes \H$ and let $\alpha_4$ be the unital endomorphism of $\A$ defined as 
\[
\alpha_4 (S +\lambda I\otimes I)= \lambda I\otimes I, \quad S \in  B(\H) \otimes K(\H), \,\, \lambda \in \bbC.
\]
The dynamical system $(\A_4, \alpha_4)$ fails properrty (4) from the above list.

For condition (6), let $\H$ and $K(\H)$ be as in the previous paragraph and consider
\[
\A_6=( K(\H)+\bbC I )\oplus \bbC I
\]
acting on $\H\oplus \H$ and let $\alpha_6$ be the unital endomorphism of $\A_6$ defined as 
\[
\alpha_6(K + \lambda I, \mu I) = (\mu I , \lambda I), \quad K \in K(\H), \, \, \lambda, \mu \in \bbC.
\]
It is clear that $\ker (\alpha_6^k) = K(\H)\oplus 0$, for all $k\geq 1$, and so $R_{\alpha} = K(\H)\oplus 0$. Therefore $R_{\alpha_6}^{\perp} = 0\oplus \bbC I$ and so 
\[
\alpha_6 (R_{\alpha_6}^{\perp} )= \bbC I \oplus 0 \not\subseteq 0\oplus \bbC I = R_{\alpha_6}^{\perp},
\]
i.e., $(\A_6, \alpha_6)$ does not satisfy (6). Note also that 
\[
\alpha_6(R_{\alpha_6})= 0 \neq R_{\alpha_6}
\]
and so $(\A_6, \alpha_6)$ does not satisfy (5) as well.

\vspace{.2in}

(ii) Could one obtain Corollary~\ref{cor;main} by just using the earlier techniques of Davidson and Kakariadis \cite{DavKak}? Notice that their \cite[Remark 3.6]{DavKak} would imply Corollary~\ref{cor;main}, provided that one could establish the existence of an isomorphism $\psi$ that satisfies the technical requirement $\|E(\psi(v))\| < \frac{2}{3}\sqrt 3 - 1 \approx 0.1547$. However the existence of such an isomorphism was not established in \cite{DavKak} and it does not seem likely that the techniques of  \cite{DavKak} alone could do that. Our Theorem~\ref{thm:mobius} and Proposition~\ref{prop;strict} show now that such an isomorphism $\psi$ exists and satisfies the much stronger condition $\|E(\psi(v))\| =0$. (This actually provides a slightly different proof of Corollary~\ref{cor;main}.) Our approach in Theorem~\ref{thm:mobius} is much different from that of \cite{DavKak} and it is inspired by the works of Muhly and Solel \cite{MS2} and Davidson, Ramsey and Shalit \cite{DRS}, which were actually available during the writing of \cite{DavKak}. 

Note that a multivariable analogue of \cite[Remark 3.6]{DavKak} does not exist in the literature, thus necessitating the approach that we followed in the proof of Theorem~\ref{T;main}.

\vspace{.2in}

(iii) Even though we provide a complete invariant for completely isometric isomorphisms, Theorem~\ref{T;main} is not the end of the story for the isomorphism problem for tensor algebras of multivariable dynamical systems. There are still questions that need to be addressed and the following two seem to be the most important.

\begin{question} Does Theorem~\ref{T;main} hold for algebraic isomorphisms? Is unitary equivalence after a conjugation a complete invariant for algebraic isomorphisms between tensor algebras of multivariable dynamical systems?
\end{question}

Indeed a great deal of work on isomorphisms between tensor algebras of abelian multivariable systems addresses algebraic isomorphisms. In general, it is possible for non-selfadjoint operator algebras, even tensor  algebras, to be algebraically or bicontinuously isomorphic without being isometrically isomorphic. So the above question essentially asks whether the various concepts of isomorphism coincide for tensor algebras of multivariable dynamical systems. Remarkably to this date there has been no work addressing algebraic isomorphisms between tensor algebras of non-abelian multivariable systems. One reason for that is perhaps the fact that algebraic isomorphisms between non-selfadjoint operator algebras do not preserve the diagonal and so the techniques developed for isometric isomorphisms are not applicable here. Hopefully the progress achieved in this paper will finally facilitate the study of algebraic isomorphisms beyond the abelian case.

\begin{question}
Is piecewise conjugacy a complete invariant for completely isometric isomorphisms between tensor algebras of multivariable dynamical systems over abelian $\ca$-algebras.
\end{question}

This important question goes back to the memoirs of Davidson and Katsoulis~\cite[Conjecture 3.26]{DavKat2} and the second half of \cite{DavKat2} was actually occupied with partial solutions to this question. In light of Theorem~\ref{T;main}, this question essentially asks whether piecewise conjugacy and unitary equivalence after a conjugation coincide as invariants for multivariable dynamical systems over abelian $\ca$-algebras; in this form the question was raised in \cite[Question 2]{KakK}.  As we mentioned in the introduction of this paper, this question does not make sense in the generality of multivariable dynamical systems over-non abelian $\ca$-algebras. Hence the invariant we offer here is the only one available beyond the abelian case.


\vspace{0.1in}

{\noindent}{\it Acknowledgement.} The authors thank the reviewer for their helpful comments.

\vspace{0.05in}

{\noindent}{\it Conflict of interest statement.} 
There are no conflicts of interest.

\vspace{0.05in}

{\noindent}{\it Financial Support.} The first author was partially supported by NSF Grant 2054781 and the second author was supported by NSERC Discovery Grant 2019-05430. 


\end{document}